\documentclass[11pt]{amsart}

\usepackage{pythagorasheader}
\usepackage[foot]{amsaddr} 

\newcommand{\p}{\mathfrak{p}} 
\newcommand{\zetazeta}{\zeta_7+\zeta_7^{-1}}
\newcommand{\Kivix}{K^{(49)}}
\newcommand{\Odya}{\O_{(2)}}
\newcommand{\Olocal}{\mathfrak{O}}


\begin{document}

\author{Jakub Kr\'asensk\'y}

\title{A cubic ring of integers with the smallest Pythagoras number}

\address{Charles University, Faculty of Mathematics and Physics, Department of Algebra,\newline
Sokolovsk\'{a}~83, 18600 Praha 8, Czech Republic}
\email{\hangindent=8em\hangafter=1 krasensky@seznam.cz}

\keywords{}

\thanks{The author aknowledges partial support by project PRIMUS/20/SCI/002 from Charles University, by Czech Science Foundation GA\v{C}R, grant 21-00420M, by projects UNCE/SCI/022 and GA UK No.\ 742120 from Charles University, and by SVV-2020-260589.}

\begin{abstract}
We prove that the ring of integers in the totally real cubic subfield $\Kivix$ of the cyclotomic field $\Q(\zeta_7)$ has Pythagoras number equal to $4$. This is the smallest possible value for a totally real number field of odd degree. Moreover, we determine which numbers are sums of integral squares in this field, and use this knowledge to construct a diagonal universal quadratic form in five variables. 
\end{abstract}

\setcounter{tocdepth}{2}  \maketitle 

\section{Introduction}

In this note, we shall study sums of integral squares in the cubic field $\Kivix = \Q(\zetazeta)$, which can be characterised e.g.\ as the unique number field of discriminant $49$ or as the maximal totally real subfield of the seventh cyclotomic field. In the main Theorem \ref{th:main}, we show that any sum of integral squares can in fact be written as a sum of four such squares -- in the terminology introduced below, the Pythagoras number of $\O_{\Kivix}$ is $4$. In Theorem~\ref{th:odd>3} we recall the generally known fact that this is the smallest possible value among totally real fields of odd degree. Moreover, with the information gained during the proof of the main theorem, it is fairly simple to fully characterise the numbers which are sums of integral squares in $\Kivix$ (Theorem \ref{th:characterisation}) and to use this to construct a diagonal universal quadratic form in five variables (Corollary \ref{co:universal}).

Let us start by summarising the known results. For standard definitions and notation, in particular regarding number fields, local fields and quadratic forms, we refer the reader to \cite{Mi} and \cite{OMeara}.

\smallskip

When $R$ is a commutative ring, $\sum R^2$ denotes the set of all elements of $R$ which can be written as a sum of squares, and for $\alpha \in \sum R^2$, its \emph{length} $\ell(\alpha)$ is the smallest integer $n$ such that $\alpha$ can be written as a sum of $n$ squares. The \emph{Pythagoras number} of $R$ is then defined as the largest length occurring in $\sum R^2$, i.e.
\[
\P(R) = \sup \bigl\{\ell(\alpha) : \alpha \in \textstyle{\sum} R^2\bigr\}.
\]
In this notation, Lagrange's four square theorem can be written as $\P(\Z)=4$. The Pythagoras number is primarily studied for fields \cite{Ho, TVY, BV, BL, BGV, Hu}, as this is arguably the easier case, but there are several results about the Pythagoras number of rings of integers $\O_K$ of a number field $K$ as well. In fact, if $K$ is not totally real, then $\P(\O_K) \leq 4$ \cite{Pf}. For totally real fields, the situation is dramatically different: By Scharlau \cite{Sch}, there are number fields $K$ with arbitrarily large $\P(\O_K)$. On the other hand, $\P(\O_K)$ is bounded by a constant depending only on the degree $d=[K : \Q]$, as shown by Kala and Yatsyna \cite{KY}. This constant is the so-called $g$-invariant $g_{\Z}(d)$, see e.g.\ \cite{KO}; thus for $1 \leq d \leq 5$, this bound is $d+3$, and for $d=6$, its value is $10$.

For a given number field $K$, it is fairly simple to obtain lower bounds for $\P(\O_K)$, since finding the length of any given $\alpha$ is only a computational task; the main content of several recent papers \cite{Ti, KRS, Kr} is finding good lower bounds for infinite families of fields. Obtaining tight upper bounds is much more difficult. The result of Kala and Yatsyna was generalised in \cite{KRS} to include $g$-invariants of any subfield of $K$, which in particular yielded $\P(\O_K) \leq 5$ for quadratic extensions of $\Q(\sqrt5)$; however, very few values of $g$-invariants of totally real orders are known, so this generalisation is of limited use.

Since cubic fields have no subfields besides $\Q$, the only easily available upper bound for their Pythagoras number is $\P(\O_K) \leq g_{\Z}(3) = 6$. Recently, Tinková \cite{Ti} showed that this upper bound is attained in infinitely many totally real cubic fields $K$. More precisely, she studied the family of so-called \emph{simplest cubic fields} $\Q(\rho_a)$, introduced by Shanks \cite{Sh}, where $\rho_a$ is a root of the polynomial $x^3-ax^2-(a+3)x-1$, $a \geq -1$, and proved that $\P(\Z[\rho_a]) = 6$ for $a \geq 3$. (This order is the full ring of integers for a positive proportion of $a$.)

For $a=0,1,2$, Tinková exhibits elements of length $5$, and for $a=-1$ she only gives $\ell(7)=4$, noting that using a computer program, she checked that no element with a reasonably small trace has bigger length. Therefore, it is natural to conjecture that $\P(\Z[\rho_{-1}])=4$. Our Theorem \ref{th:main} confirms this hypothesis, since $\Q(\rho_{-1}) = \Kivix$. Among totally real cubic fields, our result provides the first one where $\P(\O_K) \leq 4$, and by Tinková's result we know that it is the only simplest cubic field where the order $\Z[\rho_a]$ has Pythagoras number less than $5$.

In total, very few totally real fields $K$ with $\P(\O_K) \leq 4$ are known: For $K = \Q(\sqrt2), \Q(\sqrt3)$ and $\Q(\sqrt5)$ one has $\P(\O_K)=3$, while for $K = \Q, \Q(\sqrt6)$ and $\Q(\sqrt7)$ it is known that $\P(\O_K)=4$. For all other real quadratic fields, the maximal possible value $5 = g_{\Z}(2)$ is attained. The paper \cite{KRS} proves that there are at most seven real biquadratic fields with $\P(\O_K) \leq 4$, conjecturing that these seven fields indeed satisfy the inequality. This weak evidence may lead one to conjecture that in fact, there are only finitely many totally real $K$ with $\P(\O_K) \leq 4$.

We conclude this section by combining two well-known facts in order to show $\P(\O_K) \geq 4$ for every totally real number field of odd degree. The result is stated for general orders:

\begin{theorem}\label{th:odd>3}
Let $K$ be a totally real number field with $[K : \Q]$ odd. Then $\P(K) = 4$. If~$\O$ is an order in $K$, then $\P(\O) \geq 4$.
\end{theorem}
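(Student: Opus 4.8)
The plan is to establish two inequalities: the upper bound $\P(K)\le 4$, which holds for every totally real number field and follows from the Hasse--Minkowski theorem, and the lower bound $\P(K)\ge 4$, which uses the odd degree and is witnessed by the single element $7$; the statement for orders is then immediate.

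For the upper bound, let $\alpha\in\sum K^2$ with $\alpha\ne 0$. Applying any embedding (they are all real) shows that $\alpha$ is totally positive. I would prove that $\alpha$ is a sum of four squares by showing that the five-dimensional quadratic form $q=\langle 1,1,1,1,-\alpha\rangle$ is isotropic over $K$. By the Hasse--Minkowski theorem it suffices to verify this over every completion: at the real places $q$ has signature $(4,1)$ and hence is isotropic, and at each non-archimedean place $q$ is a five-dimensional form over a local field, hence isotropic (the $u$-invariant of such a field is $4$). Any nontrivial zero $(x_1,\dots,x_5)$ of $q$ must satisfy $x_5\ne 0$, for otherwise $x_1^2+\dots+x_4^2=0$ nontrivially, contradicting the formal reality of $K$; dividing by $x_5$ then writes $\alpha$ as a sum of four squares. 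This gives $\P(K)\le 4$.

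For the lower bound I claim $\ell_K(7)=4$. On one hand $7=1^2+1^2+1^2+2^2$, so $7\in\sum K^2$ and $\ell_K(7)\le 4$. On the other hand, $7$ is not a sum of three squares in $K$: since $\langle 1,1,1\rangle$ is anisotropic over $K$ by formal reality, this is equivalent to $\langle 1,1,1,-7\rangle$ being isotropic over $K$, which by Hasse--Minkowski means isotropic over every completion $K_v$. At the real places this form is indefinite; at $K_v$ with $v\nmid 2$ even the subform $\langle 1,1,1\rangle$ is isotropic, because $-1$ is a sum of two squares in every non-archimedean local field of odd residue characteristic. So the only places that can obstruct isotropy lie above $2$, and this is where the hypothesis enters: $\sum_{v\mid 2}[K_v:\Q_2]=[K:\Q]$ is odd, so some $v\mid 2$ has $[K_v:\Q_2]$ odd. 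Over $\Q_2$ one has $-7\equiv 1\pmod 8$, hence $-7$ is a square and $\langle 1,1,1,-7\rangle\cong\langle 1,1,1,1\rangle$; and $\langle 1,1,1,1\rangle$ is anisotropic over $\Q_2$, equivalently $s(\Q_2)=4$ (an elementary congruence argument modulo $8$; equivalently it is the norm form of the division quaternion algebra $(-1,-1)_{\Q_2}$). By Springer's theorem on odd-degree extensions it stays anisotropic over $K_v$, so $\langle 1,1,1,-7\rangle$ is anisotropic over $K_v$ and therefore over $K$. Hence $\ell_K(7)=4$ and $\P(K)=4$. Finally, any order $\O$ in $K$ contains $\Z$, so $7=1^2+1^2+1^2+2^2\in\sum\O^2$; a representation of $7$ as a sum of three squares from $\O$ would in particular be one from $K$, which we have ruled out, so $\ell_\O(7)=4$ and $\P(\O)\ge 4$.

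The one step that needs genuine arithmetic input, rather than a formal appeal to Hasse--Minkowski and to signatures, is the anisotropy of $\langle 1,1,1,1\rangle$ over $\Q_2$ together with its persistence under the odd-degree local extension $K_v/\Q_2$; all the other local verifications are routine.
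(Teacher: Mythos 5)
Your proof is correct, but the key step differs from the paper's. The paper simply cites Lam for $\P(K)=4$ and sketches the idea: by Hilbert reciprocity, an odd number of real places forces the existence of \emph{some} finite place where $\langle 1,1,1\rangle$ is anisotropic, whence a totally positive element that is locally a negative square there cannot be a sum of three squares. You instead bypass Hilbert reciprocity entirely: the degree formula $\sum_{v\mid 2}[K_v:\Q_2]=[K:\Q]$ produces a dyadic place of odd local degree, Springer's theorem transports the anisotropy of $\langle 1,1,1,1\rangle$ over $\Q_2$ (level $4$) up to $K_v$, and the congruence $-7\equiv 1\pmod 8$ makes $7$ an explicit global witness with $\ell_K(7)=4$. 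What your route buys is concreteness and self-containment -- a single universal witness $7\in\Z$ that works for every such $K$ -- which also lets you dispense with the paper's separate lemma $\P(R)\ge\P(\operatorname{Frac} R)$ (the clearing-denominators trick with $\alpha\beta$): since $7$ already lies in every order, the statement for orders is immediate. Amusingly, $7$ is exactly the element Tinkov\'a used to show $\ell(7)=4$ in this specific field, and your argument explains why it works in any odd-degree totally real field. The upper bound $\P(K)\le 4$ is the same standard Hasse--Minkowski argument in both cases; note also that for the lower bound you only need the trivial direction of Hasse--Minkowski (a global isotropic vector gives local ones), not the full local--global principle.
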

\begin{proof}
The fact that $\P(K)=4$ can be found in \cite[Ex.\ ~XI.5.9(2)]{La}. It is an application of the local-global principle, and the core of the argument is as follows: Since there is an odd number of real embeddings, by Hilbert's reciprocity law there exists at least one finite place where the form $x^2+y^2+z^2$ is anisotropic.

It is also well known that if $R$ is any integral domain and $F$ its field of fractions, then $\P(R) \geq \P(F)$. Indeed, if $\frac{\alpha}{\beta}$ is not a sum of $n$ squares in $F$, then the same holds for $\alpha\beta$ in $F$ and thus also in $R$. Now the proof is concluded since the field of fractions of $\O$ is $K$.
\end{proof}

\section{Preliminaries and results}

We have already defined the Pythagoras number, the set $\sum R^2$ and the length $\ell(\,\cdot\,)$. By $I_n$ we mean the quadratic form $x_1^2 + \cdots + x_n^2$. For brevity, we sometimes denote squares simply by $\square$. The symbol $h(\varphi)$ stands for the class number of the quadratic form $\varphi$. For its definition, as well as for other standard terminology and notation, we refer the reader to O'Meara's book \cite{OMeara}. The starting point of this paper is the following result, proven by analytical methods in \cite{Pe2}:

\begin{theorem}[Peters, 1977] \label{th:peters}
There are only six totally real number fields where $h(I_3)=1$; $\Kivix$ is one of them. (The other fields in question are $\Q$, $\Q(\sqrt2)$, $\Q(\sqrt5)$, $\Q(\sqrt{17})$ and the cubic field $K^{(148)}$ with discriminant $148$.)
\end{theorem}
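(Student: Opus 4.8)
The plan is to recast the condition $h(I_3)=1$ as an exact identity between the mass of the genus of $I_3$ over $\O_K$ and the reciprocal of a group order, and then to control every quantity in it analytically. Since $K$ is totally real, $I_3$ is totally positive definite, so its genus is finite and the Minkowski--Siegel mass formula applies without any of the pathologies of the indefinite case. The first step is to identify the orthogonal group of the standard lattice: if $(x_1,x_2,x_3)\in\O_K^3$ satisfies $x_1^2+x_2^2+x_3^2=1$, then $|\sigma(x_i)|\le 1$ for every (necessarily real) embedding $\sigma$ and every $i$, so each $x_i$ is an algebraic integer all of whose conjugates lie in the closed unit disc; by Kronecker's theorem $x_i$ is $0$ or a root of unity, and in a totally real field the only roots of unity are $\pm1$. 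Hence the norm-one vectors are exactly $\pm e_1,\pm e_2,\pm e_3$, and since an isometry of $(\O_K^3,I_3)$ is determined by its action on these, $\lvert O(\O_K^3,I_3)\rvert = 2^3\cdot 3!=48$ for \emph{every} totally real $K$. As $[\O_K^3]$ always lies in the genus and every genus class contributes at most $\tfrac12$ to the mass (because $-\mathrm{id}$ is always an isometry), $h(I_3)=1$ is equivalent to the mass of $I_3$ being exactly $\tfrac1{48}$.

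Next I would feed this into the mass formula. For the ternary form $I_3$ it expresses the mass in the shape
\[
\operatorname{mass}(I_3)=C_r\cdot |d_K|^{3/2}\cdot\zeta_K(2)\cdot\prod_{\p\mid 2}\lambda_\p,
\]
where $r=[K:\Q]$, the constant $C_r>0$ collects the $r$ (identical) archimedean local densities together with the universal factors of the formula, $\zeta_K$ is the Dedekind zeta function, and the $\lambda_\p$ are the $2$-adic local density corrections. Here $\zeta_K(2)>1$ by the Euler product, and the $\lambda_\p$ can be shown to lie in an explicit interval bounded away from $0$ and $\infty$ uniformly in $K$. Consequently $\operatorname{mass}(I_3)$ is bounded below by a positive multiple of $|d_K|^{3/2}$ depending only on $r$, so for each fixed degree the equation $\operatorname{mass}(I_3)=\tfrac1{48}$ can hold only for finitely many, explicitly bounded, discriminants; and plugging in the Minkowski--Odlyzko lower bounds for root discriminants of totally real fields shows that the right-hand side already exceeds $\tfrac1{48}$ once $r$ is larger than some small explicit bound. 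This reduces the classification to a finite, effectively listable set of fields, for each of which one evaluates $\operatorname{mass}(I_3)$ (equivalently $\zeta_K(2)$ and the finitely many $2$-adic densities) and checks whether it equals $\tfrac1{48}$; the survivors are the six fields in the statement, and for those $h(I_3)=1$ follows at once from the equivalence established above.

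I expect the main obstacle to be making the mass formula genuinely effective, and within it the $2$-adic densities $\lambda_\p$: the sum of three squares is the classical irregular case at the prime $2$ (over $\Q_2$ the form $x^2+y^2+z^2$ is not $2$-adically universal, and its local representation behaviour is exceptional), so $\lambda_\p$ demands a careful case analysis organised by the decomposition and ramification type of $2$ in $K$. A secondary, quantitative difficulty is that $C_r$ is numerically small, so the discriminant bounds coming out of the mass inequality are only useful if the analytic input (bounds on $\zeta_K(2)$ and on root discriminants) is pushed far enough that the residual finite search is genuinely manageable. One could instead run the analytic side through Hilbert theta series: $h(I_3)=1$ amounts to the weight-$\tfrac32$ theta series of $I_3$ being purely Eisenstein, so a dimension bound for the relevant space of Hilbert cusp forms gives an alternative route to finiteness --- presumably close to the analytic method of Peters --- though half-integral weight Hilbert modular forms over a general totally real base carry their own technical overhead.
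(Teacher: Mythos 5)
This statement is not proved in the paper at all: it is quoted as an external result, with the text explicitly attributing it to Peters' 1977 paper \cite{Pe2} (``proven by analytical methods''), and the rest of the article uses it as a black box to get the local--global principle for $I_3$. So there is no in-paper argument to compare yours against; the relevant comparison is with Peters' method, and your outline is indeed a faithful reconstruction of that analytic strategy. Your structural observations are correct: the unit vectors of $(\O_K^3,I_3)$ are exactly $\pm e_1,\pm e_2,\pm e_3$ by Kronecker's theorem (so the automorphism group has order $48$ for every totally real $K$), hence $h(I_3)=1$ is equivalent to $\operatorname{mass}(I_3)=\tfrac1{48}$ since any further class would add a positive contribution (the remark about $-\mathrm{id}$ and the bound $\tfrac12$ is not needed for this equivalence). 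The shape of the mass formula you write down, with $|d_K|^{3/2}\zeta_K(2)$ and dyadic corrections, is the right one for ternary unimodular lattices, and the finiteness mechanism --- fixed degree gives bounded discriminant by the mass inequality, large degree is excluded by root-discriminant lower bounds --- is exactly how such one-class classifications are run.

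That said, what you have written is a strategy, not a proof. Every quantitatively decisive step is deferred: the explicit archimedean constants $C_r$, the case analysis for the $2$-adic densities $\lambda_\p$ (which you correctly flag as the main obstacle, since $I_3$ is the irregular case at $2$), the actual degree cutoff coming from the discriminant bounds, the enumeration of the surviving fields of small discriminant, and the verification that the six listed fields really do have mass $\tfrac1{48}$. None of these is routine --- they are the content of Peters' paper --- so as submitted your text establishes the framework and the reduction to a finite computation, but not the classification itself. Within the context of the present article, where the theorem is imported by citation, treating it the same way (or carrying out the sketch in full) would be the appropriate resolution.
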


From now on, we will leave general number fields and focus only on the field with discriminant $49$. Let $\zeta_7 = \mathrm{exp}(\frac{2\pi\mathrm{i}}{7})$ be the primitive seventh root of unity and let $\Kivix$ stand for the totally real cubic field $\Q(\zetazeta)$. Usually we denote its ring of integers $\Z[\zetazeta]$ simply by $\O$. Later, we will also use the element $\rho = \zetazeta$ which generates $\O$; its minimal polynomial is $x^3+x^2-2x-1$. Our main theorem is the following:

\begin{theorem} \label{th:main}
$\P\bigl(\Z[\zetazeta]\bigr)=4$.
\end{theorem}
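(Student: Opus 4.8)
Since $\Kivix$ is a totally real cubic field, Theorem \ref{th:odd>3} already gives $\P(\O) \ge 4$, so the whole point is the reverse inequality $\P(\O) \le 4$: every $\alpha \in \sum\O^2$ must have length at most $4$. The engine of the proof is Peters' Theorem \ref{th:peters}. Because $h(I_3) = 1$ for $\Kivix$, the form $I_3$ represents over $\O$ exactly those elements it represents over every completion, so ``$\ell(\alpha) \le 3$'' becomes a purely local question. The plan is therefore to first pin down the local obstructions and then show that whatever local failure can occur for $I_3$ is repaired by subtracting a single square, which costs at most one more.

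\emph{The local analysis.} Reducing the minimal polynomial $x^3+x^2-2x-1$ of $\rho$ modulo $2$ gives $x^3+x^2+1$, which is irreducible over $\mathbb{F}_2$; hence $2$ is inert and the only dyadic prime is $\p = 2\O$, with completion $\Olocal = \O_\p$ the unramified cubic extension of $\Z_2$ (residue field $\mathbb{F}_8$, uniformizer $2$). At each of the three archimedean places $I_3$ represents precisely the non-negative numbers, so a nonzero sum of squares, being totally positive, automatically meets these conditions. At each non-dyadic prime $\p$ the ternary form $I_3$ is isotropic over the residue field (every ternary form over a finite field is), and, being unimodular, it then represents all of $\O_\p$. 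Consequently the one and only obstruction to $\ell(\alpha)\le 3$ for a nonzero $\alpha \in \sum\O^2$ is whether $\alpha$, viewed in $\Olocal$, is a sum of three squares there.

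\emph{The dyadic computation and the endgame.} The crux is to describe the set $T \subseteq \Olocal$ of elements that are sums of three squares in the unramified cubic dyadic ring $\Olocal$. By analogy with the classical case $\Olocal = \Z_2$ (where $T$ omits exactly the numbers $2^{2a}u$ with $u$ an odd unit $\equiv -1 \pmod{8}$), I would expect $T$ to be cut out by the conditions: $v_\p(\alpha)$ even, $\alpha = 2^{2a}u$, and $u$ avoiding one distinguished square class of $\Olocal^\times$ — equivalently, a congruence on $u$ modulo $8\Olocal$; but care is needed, since over $\Olocal$ the squares of units fill a larger part of $(\Olocal/8\Olocal)^\times$ than over $\Z_2$. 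I would prove this by the Jordan-splitting analysis of quadratic lattices over $\Olocal$. Granting such a description, the theorem follows as in Lagrange's argument: given $\alpha \in \sum\O^2$, $\alpha \ne 0$, lying outside $T$ at $\p$, choose $\gamma = 2^a\gamma_0 \in \O$ with $\gamma_0$ odd and $\gamma_0^2$ in the appropriate residue class modulo $8$, so that $\alpha - \gamma^2$ lands in $T$ at $\p$ (and trivially remains represented by $I_3$ everywhere else), giving $\ell(\alpha-\gamma^2)\le 3$ and hence $\ell(\alpha)\le 4$. To ensure $\alpha-\gamma^2$ stays totally positive, I would first use the invariance $\ell(\varepsilon^2\alpha)=\ell(\alpha)$ under units $\varepsilon$ to move $\alpha$ into a region where its three conjugates are comparable, reducing to the case where they are all large — leaving only finitely many $\alpha$ of bounded trace, which are disposed of by a direct (computer-assisted) check.

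\emph{Main obstacle.} I expect the hard part to be exactly the explicit dyadic computation: determining $T = I_3(\Olocal)$ over the unramified cubic extension of $\Z_2$, where the theory of quadratic forms is at its most delicate, and then turning ``subtract a correcting square'' into an effective statement — an explicit bound on the trace (after the unit normalization) below which only finitely many elements remain, together with the verification for those. The remaining ingredients — the reduction via $h(I_3)=1$, the inertness of $2$, the universality of $I_3$ at non-dyadic primes, and the unit trick — are routine.
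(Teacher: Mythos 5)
Your skeleton is the right one --- the lower bound from Theorem \ref{th:odd>3}, the reduction of ``sum of three squares'' to a local question via $h(I_3)=1$, the observation that $2$ is inert so the only interesting place is the dyadic one, and the plan to repair the single dyadic obstruction by subtracting one square. Two remarks on the local part before the main issue: the description of the excluded set is simpler than your guess suggests (the elements of $\Odya$ not represented by $I_3$ are exactly those of the form $-t^2$, and this follows from an elementary congruence argument --- a sum of two or three unit squares is never $\equiv 0 \pmod 4$ in the unramified odd-degree dyadic ring, because $t^2+t+1$ has no root in $\mathbb{F}_{2^3}$ --- combined with O'Meara's classification of elements represented by an anisotropic ternary form of square determinant over the local \emph{field}); no Jordan-splitting machinery is needed. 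Moreover the sub-case where $\alpha$ is dyadically $-(2t)^2$ is handled by a clean trick: $\alpha/2$ is then a sum of three squares, and $2(x^2+y^2)=(x+y)^2+(x-y)^2$ turns this into four squares for $\alpha$.

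The genuine gap is in your endgame. Your plan --- pick a free correcting square $\gamma^2$ with the right dyadic residue and with $\alpha-\gamma^2$ totally positive --- uses the hypothesis $\alpha\in\sum\O^2$ only through total positivity, so if it worked it would prove that \emph{every} totally positive $\alpha$ failing the dyadic condition is a sum of four squares. That is false: $1+\rho+\rho^2$ is totally positive and is a sum of four squares at every completion, yet it is not a sum of any number of integral squares (this is the content of Theorem \ref{th:characterisation}; such ``exceptional elements'' exist in every cubic order by Scharlau). Consequently no admissible $\gamma$ exists for this element, nor for any of its infinitely many unit-square multiples, and your fallback of a unit normalization plus a finite check cannot be completed as stated --- the finite check would have to \emph{prove non-representability as a sum of arbitrarily many squares}, which you have not set up, and the finiteness of the residual set is itself unproved. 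The paper's resolution is to use the hypothesis $\alpha\in\sum\O^2$ essentially: write $\alpha=\sum x_i^2$, note that some $x_j$ must be a dyadic unit (since $\alpha$ is dyadically $-u^2$, a unit), and take $\gamma=x_j$. Then $\alpha-x_j^2$ is automatically totally non-negative, and it cannot be $-\square$ in $\Odya$ because that would make $u^2+x_j^2$ a square of $\Odya$, contradicting the elementary lemma that a sum of two unit squares is never a square there. This replaces your delicate archimedean/trace analysis and computer verification by one line, and it is the step your proposal is missing.
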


There are many remarkable properties of $\Kivix$; for example, it has class number $1$, it is a Galois extension of $\Q$, and every other cubic Galois extension of $\Q$ has larger discriminant (in absolute value).

Thanks to Theorem \ref{th:peters}, sums of three integral squares in $\O$ satisfy the local-global principle; hence, our main tools are the completions $\O_\p$ of the ring of integers $\O$ at places $\p$. In particular, we will examine the only dyadic place of $\O$, corresponding to the prime ideal $(2)$. (The fact that $2$ is indeed a prime of $\O = \Z[\zetazeta]$ is easily checked using \cite[Th.\ ~3.41]{Mi}.)

\smallskip

In general, local conditions do not suffice to describe the set $\sum \O_K^2$; usually there are elements which are not a sum of squares (\enquote{globally}) despite being a sum of integral squares at all completions. Scharlau calls them \enquote{Ausnahmeelemente} (i.e.\ \emph{exceptional elements}) and examines them in depth in \cite{Sch2} -- for example, he shows that up to multiplication by squares of units, there are always only finitely many of them; on the other hand, he proves that in every cubic order there is at least one such element. As the discriminant of $K$ is odd, the only local condition for a sum of squares is total positivity \cite[Kap.\ ~0]{Sch2}. In our second result, we characterise the set of sums of squares, showing that up to multiplication by units there is only one exceptional element, namely $1+\rho+\rho^2$. (Remember that $\rho=\zetazeta$.)

\makeatletter
\newcommand{\mylabel}[2]{#2\def\@currentlabel{#2}\label{#1}}
\makeatother 

\begin{theorem} \label{th:characterisation}
Let $\alpha \in \O$ be totally positive. Then the following statements are equivalent:
 \begin{enumerate}
     \item[\mylabel{a}{(1a)}] $\alpha \notin \sum \O^2$.
     \item[\mylabel{b}{(1b)}] $\alpha$ is not a sum of four integral squares.
     \item[\mylabel{c}{(2a)}] The norm of $\alpha$ is $7$.
     \item[\mylabel{d}{(2b)}] $\alpha = u^2(1+\rho+\rho^2)$ for a unit $u \in \O$.
     \item[\mylabel{e}{(2c)}] $\alpha$ is an indecomposable element and not a square.
 \end{enumerate}
\end{theorem}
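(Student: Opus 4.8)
The plan is to prove the equivalence $\ref{c}\Leftrightarrow\ref{d}$ together with the cycle $\ref{c}\Rightarrow\ref{e}\Rightarrow\ref{a}\Leftrightarrow\ref{b}\Rightarrow\ref{c}$; all of this except the final implication is elementary. I would begin by recording two facts about $\pi:=1+\rho+\rho^2$. It is totally positive, since it has the shape $1+t+t^2>0$ at every real embedding. And $N(\pi)=7$: from $\rho^2=\zeta_7^2+\zeta_7^{-2}+2$ one gets $\pi=3+(\zeta_7+\zeta_7^{-1})+(\zeta_7^2+\zeta_7^{-2})=2-(\zeta_7^3+\zeta_7^{-3})$ (using $\sum_{k=0}^{6}\zeta_7^k=0$), and since $\zeta_7^3+\zeta_7^{-3}$ is a conjugate of $\rho$, its norm equals the value at $2$ of the minimal polynomial $x^3+x^2-2x-1$, namely $7$. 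The implication $\ref{d}\Rightarrow\ref{c}$ follows at once, as $N(u^2\pi)=N(u)^2\cdot 7=7$. For $\ref{c}\Rightarrow\ref{d}$: the prime $7$ is totally ramified in $\Kivix$ (the discriminant being $49$), so the prime ideal above it is the unique ideal of norm $7$; hence $(\alpha)=(\pi)$, so $\alpha=v\pi$ for a unit $v$, which is totally positive because $\alpha$ and $\pi$ are. It then suffices that every totally positive unit of $\O$ be a square, and this holds because the signs of the units $-1$, $\rho$ and $\rho+1$ already exhaust $\{\pm1\}^3$: the sign homomorphism $\O^\times\to\{\pm1\}^3$ is therefore onto, so its kernel (the group of totally positive units) has index $8=[\O^\times:(\O^\times)^2]$ and thus coincides with $(\O^\times)^2$. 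Hence $v=u^2$ and $\ref{d}$ holds.

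For $\ref{c}\Rightarrow\ref{e}$: an element of norm $7$ is not a square, since $7$ is not a rational square, and it is indecomposable by a norm count. Indeed, if $\alpha=\gamma+\delta$ with $\gamma,\delta\in\O$ totally positive, then $\gamma^{(i)}\delta^{(i)}\le\tfrac14\bigl(\gamma^{(i)}+\delta^{(i)}\bigr)^2=\tfrac14\bigl(\alpha^{(i)}\bigr)^2$ at each embedding, so $1\le N(\gamma)N(\delta)\le N(\alpha)^2/4^3=\tfrac{49}{64}<1$, a contradiction. (This argument shows more generally that any totally positive element of norm less than $2^{3}=8$ is indecomposable.) The step $\ref{e}\Rightarrow\ref{a}$ is then formal: a sum of two or more nonzero squares is a sum of two totally positive elements, hence decomposable, so an indecomposable element which is a sum of squares must be a single square; an indecomposable non-square is therefore not a sum of squares. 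Finally $\ref{a}\Leftrightarrow\ref{b}$ is immediate from Theorem~\ref{th:main}: since $\P(\O)=4$, being a sum of integral squares is the same as being a sum of four integral squares.

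The one remaining implication, $\ref{b}\Rightarrow\ref{c}$, is the heart of the matter, and here I would appeal to the analysis carried out in the proof of Theorem~\ref{th:main}. That proof determines, for every totally positive $\alpha\in\O$, whether $\alpha$ is a sum of four integral squares: using Peters' result $h(I_3)=1$ and the local--global principle, the problem reduces to explicit congruence conditions at the single dyadic prime $(2)$, and the conclusion is that the only totally positive elements failing to be a sum of four squares are precisely those of the form $u^2(1+\rho+\rho^2)$, all of which have norm $7$. Granting this input, $\ref{b}$ forces $N(\alpha)=7$, closing the cycle. So the entire chain of equivalences is elementary save for this single point, and that point -- isolating the exceptional orbit -- is exactly the part that rests on the detailed $2$-adic study of sums of three and four squares performed for the main theorem; I expect it to be the main obstacle.
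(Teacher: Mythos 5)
Your treatment of \ref{c}$\Leftrightarrow$\ref{d}, \ref{c}$\Rightarrow$\ref{e}, \ref{e}$\Rightarrow$\ref{a} and \ref{a}$\Leftrightarrow$\ref{b} is correct, and in places more self-contained than the paper: the AM--GM norm count showing that any totally positive element of norm less than $8$ is indecomposable replaces the citation of the Kala--Tinkov\'a classification of indecomposables, and the explicit sign computation with $-1$, $\rho$, $\rho+1$ replaces the citation for ``totally positive units are squares''. Those parts are fine.

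However, the implication \ref{b}$\Rightarrow$\ref{c}, which you correctly single out as the heart of the matter, is not proved, and the route you sketch for it cannot work. You propose to extract from the proof of Theorem \ref{th:main} a determination, by congruence conditions at the dyadic prime, of exactly which totally positive elements are sums of four squares. The proof of Theorem \ref{th:main} establishes no such determination: it only shows that elements of $\sum\O^2$ have length at most $4$, and its local analysis (Corollary \ref{co:minusunitbad}) gives merely a \emph{necessary} condition for failure, namely that $\alpha=-u^2$ in $\Odya$ with $u$ a unit. That dyadic condition does not isolate the orbit $u^2(1+\rho+\rho^2)$: for instance $7=-(-7)$ is the negative of a unit square in $\Z_2\subseteq\Odya$, yet $\ell(7)=4$. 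More fundamentally, no purely local analysis can decide this implication, because $I_4$ violates the integral local--global principle precisely at the exceptional elements: $1+\rho+\rho^2$ is a sum of four squares at every completion, as the paper points out. The missing ingredient is a genuinely global argument; the paper supplies it by proving the contrapositive of \ref{a}$\Rightarrow$\ref{e}, i.e.\ that every \emph{decomposable} totally positive element is a sum of squares. Writing $\alpha=\beta+\gamma$ with $\beta,\gamma$ totally positive, one treats three cases (both, neither, or exactly one of $\beta,\gamma$ a sum of squares) and in each case combines Corollary \ref{co:minusunitbad} with Lemma \ref{le:sumoftwosquares}(1) --- a sum of two unit squares in $\Odya$ is never a square --- to force $\alpha\in\sum\O^2$; for example, if neither summand is a sum of squares, then dyadically $\alpha=-(u_1^2+u_2^2)$, which is not of the form $-u^2$. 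Without this decomposition argument, or an equivalent global input, your cycle does not close.
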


The proof of this result, as well as the therein contained notion of \emph{indecomposable elements}, is to be found in Section \ref{se:characterisation}. Note that while a full characterisation of $\sum \O_K^2$ is not usually known, for real quadratic fields, the problem is solved in \cite[Satz 2]{Pe}.

\smallskip

We can immediately state a simple corollary of our characterisation of $\sum \O^2$. Remember that a totally positive definite quadratic form is called \emph{universal} if it represents all totally positive integers. In \cite{KY}, it is proven that $\Kivix$ has the very rare property of admitting a universal quadratic form with rational integral coefficients (the form in question has four variables: $x^2+y^2+z^2+w^2+xw+yw+zw$). With our knowledge, one easily constructs a universal quadratic form in five variables:

\begin{corollary} \label{co:universal}
The totally positive definite quadratic form $x_1^2 + x_2^2 + x_3^2 + x_4^2 + (1+\rho+\rho^2)x_5^2$ is universal over $\O$.
\end{corollary}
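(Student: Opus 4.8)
\textbf{Proof proposal for Corollary \ref{co:universal}.}

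The plan is to reduce the universality of the five-variable form directly to Theorem \ref{th:characterisation}. Let $\beta \in \O$ be an arbitrary totally positive integer; I must show that $\beta$ is represented by $x_1^2 + x_2^2 + x_3^2 + x_4^2 + (1+\rho+\rho^2)x_5^2$. First I would dispose of the easy case: if $\beta \in \sum \O^2$, then by Theorem \ref{th:main} we have $\ell(\beta) \leq 4$, so $\beta = a_1^2 + a_2^2 + a_3^2 + a_4^2$ with $a_i \in \O$, and setting $x_5 = 0$ finishes this case. The interesting case is $\beta \notin \sum \O^2$; here I invoke the equivalence $\text{\ref{a}} \Leftrightarrow \text{\ref{d}}$ of Theorem \ref{th:characterisation}, which gives a unit $u \in \O$ with $\beta = u^2(1+\rho+\rho^2)$. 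Then taking $x_1 = x_2 = x_3 = x_4 = 0$ and $x_5 = u$ represents $\beta$, and we are done.

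The only point that requires a moment's care is that the form is genuinely \emph{totally positive definite}, which is needed both for the statement to be meaningful and implicitly for the reduction. The coefficients of $x_1^2, \dots, x_4^2$ are $1 > 0$ at every real embedding, and the coefficient $1+\rho+\rho^2$ of $x_5^2$ is totally positive: indeed $1+\rho+\rho^2$ is exactly the exceptional element of Theorem \ref{th:characterisation}, which is totally positive by hypothesis of that theorem (equivalently, one checks directly that $1+\rho+\rho^2 = \frac{(1+2\rho)^2+3}{4}$ is a sum of elements that are nonnegative under every embedding, or simply evaluates it at the three real conjugates of $\rho$, the roots of $x^3+x^2-2x-1$). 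Hence the form is a positive combination of squares of linear forms at each embedding, so it is totally positive definite and represents only totally positive integers together with $0$.

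There is no real obstacle here: the entire content of the corollary is carried by Theorem \ref{th:characterisation}, specifically the fact that \emph{every} totally positive integer outside $\sum \O^2$ is a unit square times the single exceptional element $1+\rho+\rho^2$. I expect the proof to be three or four lines. The only thing to double-check when writing it up is the logical direction of the equivalences in Theorem \ref{th:characterisation}: I need "$\alpha$ totally positive and $\alpha \notin \sum \O^2$ implies $\alpha = u^2(1+\rho+\rho^2)$", which is precisely $\text{\ref{a}} \Rightarrow \text{\ref{d}}$, so no converse is needed. I would phrase the whole argument as a short case distinction on whether $\beta$ is a sum of squares, citing Theorem \ref{th:main} in the first case and Theorem \ref{th:characterisation} in the second.
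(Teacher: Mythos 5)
Your proposal is correct and matches the paper's own (very short) argument: both split totally positive $\beta$ according to whether it is a sum of (four) squares, using Theorem \ref{th:characterisation} to identify the remaining elements as $u^2(1+\rho+\rho^2)$ and representing those by the fifth variable. The extra verification that $1+\rho+\rho^2$ is totally positive (via $1+\rho+\rho^2 = \frac{(1+2\rho)^2+3}{4}$) is a harmless addition the paper leaves implicit.
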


While the form given in \cite{KY} has less variables, their form is \emph{non-classical}, i.e.\ the corresponding symmetric matrix contains non-integral entries. Our form is not only classical, but even diagonal. For monogenic simplest cubic fields, \cite[Th.\ 1.1]{KT} provides a construction of a diagonal universal quadratic form, which for $\O_{\Kivix}$ requires $12$ variables, so our form is much simpler. To the best of our knowledge, no classical universal form in five or less variables in a totally real field of odd degree has been previously known. (By a well-known argument involving Hilbert reciprocity, at least four variables are necessary for fields of odd degree \cite{EK}.)

\section{Sums of three squares}

It is well known \cite[102:5]{OMeara} that every quadratic form with class number $1$ satisfies the local-global principle. This means that a number is represented over $\O$ if and only if it is represented over $\O_{\p}$ for all places $\p$. While the sum of four squares $I_4$ does not have this property (we will later prove that $1+\rho+\rho^2$ is not a sum of any number of squares, while being a sum of four squares everywhere locally), we shall exploit that $I_3$ satisfies the local-global principle to fully determine which numbers are a sum of three integral squares in $\Kivix$:

\begin{proposition} \label{pr:localglobal}
In $\O = \Z[\zetazeta]$, the quadratic form $I_3$ represents a nonzero number $\alpha$ if and only if $\alpha$ satisfies both the following conditions:
 \begin{enumerate}
     \item $\alpha$ is totally positive;
     \item at the dyadic place $\Odya$, $\alpha \neq -t^2$ for all $t \in \Odya$.
 \end{enumerate}
\end{proposition}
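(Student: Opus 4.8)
The plan is to reduce to local conditions by the local--global principle, which applies to $I_3$ since $h(I_3)=1$ by Peters' Theorem~\ref{th:peters}, and then to examine each completion $\O_\p$. At the three real places $I_3$ is positive definite and so represents exactly the positive reals; this is condition~(1). At a non-dyadic finite place $\p$ the residue field has odd order, so $x^2+y^2$ represents $-1$ modulo $\p$ (being isotropic there, or else the norm form of the residue extension, which is surjective); hence $I_3$ is isotropic over $F_\p$, and a unimodular isotropic lattice over a non-dyadic local ring splits off a hyperbolic plane, which represents all of $\O_\p$. Thus odd finite places impose no condition, and the whole question reduces to the dyadic place. Here $\Odya$ is the unramified cubic extension of $\Z_2$ with residue field $\mathbb{F}_8$, and since $2$ is inert we may take $\Odya=\Z_2[\zeta_7]$; writing $F$ for the fraction field of $\Odya$, note that for $\alpha\ne0$ condition~(2) just says $-\alpha\notin F^{\times2}$ (as $\Odya$ is integrally closed, any square root of $-\alpha$ in $F$ lies in $\Odya$).

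The dyadic analysis hinges on a \emph{congruence lemma}: if $a^2+b^2+c^2\equiv0\pmod4$ in $\Odya$, then $a,b,c\in2\Odya$. Granting it, a nonzero $\alpha\in\Odya$ is a sum of three squares in $\Odya$ if and only if it is one in $F$: given $\alpha=a^2+b^2+c^2$ over $F$ with $a,b,c$ not all $0$, factor out the least valuation to get $2^{2m}\alpha=a_0^2+b_0^2+c_0^2$ with $a_0,b_0,c_0\in\Odya$ not all in $2\Odya$; were $m\ge1$ the right side would be $\equiv0\pmod4$, contradicting the lemma, so $m=0$. Next, over the local field $F$ the form $I_3$ represents a nonzero $\alpha$ iff $\langle1,1,1,-\alpha\rangle$ is isotropic. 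Since $I_4=\langle1,1,1,1\rangle$ is anisotropic over $\Q_2$ (it is the norm form of the Hamilton quaternions, which are division over $\Q_2$) and $[F:\Q_2]=3$ is odd, Springer's theorem keeps $I_4$ anisotropic over $F$; as the $u$-invariant of a local field is $4$, every anisotropic quaternary $F$-form is isometric to $I_4$. So $\langle1,1,1,-\alpha\rangle$ is isotropic unless it is isometric to $I_4$, which happens exactly when $-\alpha\in F^{\times2}$ (if $-\alpha=s^2$ then $\langle1,1,1,-\alpha\rangle\cong\langle1,1,1,1\rangle$; otherwise the two forms have distinct determinant classes). Chaining these equivalences, $I_3$ represents a nonzero $\alpha$ over $\Odya$ iff $-\alpha\notin F^{\times2}$, i.e.\ iff condition~(2) holds; together with the archimedean and odd-finite analysis, the local--global principle, and the fact that $\alpha=0$ is excluded by hypothesis (being trivially a sum of squares), this proves the proposition.

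It remains to prove the congruence lemma, which carries the sole nontrivial computation. Reducing $a^2+b^2+c^2\equiv0\pmod4$ modulo $2$ gives $(\bar a+\bar b+\bar c)^2=0$ in $\mathbb{F}_8$, hence $\bar a+\bar b+\bar c=0$. Suppose $\bar a,\bar b,\bar c$ are not all $0$; then either exactly one of them is $0$ --- say $\bar c=0$, forcing $\bar a=\bar b\ne0$ --- or all three are distinct and nonzero. In the first case, with $\omega$ the Teichm\"uller lift of $\bar a$, one gets $a^2+b^2+c^2\equiv2\omega^2\pmod4$, of valuation $1$. In the second, $\bar a,\bar b,\bar c$ form a line of the Fano plane on $\mathbb{F}_8^\times$, and so do their squares; replacing $a,b,c$ by their Teichm\"uller lifts leaves $a^2+b^2+c^2$ unchanged modulo $4$, so $a^2+b^2+c^2$ is congruent mod $4$ to a sum $\zeta_7^i+\zeta_7^j+\zeta_7^k$ of three seventh roots of unity whose exponent set $\{i,j,k\}$ is a translate of a perfect difference set modulo $7$. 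For such a set $|\zeta_7^i+\zeta_7^j+\zeta_7^k|^2=3-1=2$ in every complex embedding, so $\zeta_7^i+\zeta_7^j+\zeta_7^k$ has norm $2^3$ over $\Q$; since $2$ is a product of two degree-$3$ primes of $\Z[\zeta_7]$, this forces its valuation in $\Odya$ to be exactly $1$. Either way $a^2+b^2+c^2$ has valuation $1$, contradicting $a^2+b^2+c^2\equiv0\pmod4$; hence $\bar a=\bar b=\bar c=0$. The difference-set norm computation, along with verifying that the case analysis really excludes every primitive solution modulo $4$, is the step I expect to be the main obstacle; everything else is formal.
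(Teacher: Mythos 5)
Your argument is correct, and its skeleton coincides with the paper's: reduce to local conditions via $h(I_3)=1$ (Theorem~\ref{th:peters}), dispose of the real and non-dyadic places, and at the dyadic place first show by a mod-$4$ congruence that a sum of three squares lying in $\Odya$ forces the three squares themselves to be integral, then classify the elements represented by $I_3$ over the local \emph{field} (your $\langle1,1,1,-\alpha\rangle$ / Springer / $u$-invariant chain is equivalent to the paper's citation of O'Meara 63:21 for an anisotropic ternary form of square determinant; just note that the uniqueness of the anisotropic quaternary form over a local field is what you really use, not merely $u(F)=4$). Where you genuinely diverge is the proof of the congruence lemma, which is indeed the heart of the matter. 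The paper proves the same statement (Lemma~\ref{le:sumoftwosquares}(2)) in two lines: from $u^2+y^2+z^2\equiv0\pmod4$ with $u$ a unit it gets $z^2\equiv(u+y)^2\pmod2$, upgrades this to a congruence mod $4$ via the Observation, and deduces $u^2+uy+y^2\equiv0\pmod2$, i.e.\ a root of the irreducible polynomial $t^2+t+1$ in $\mathbb{F}_{2^d}$ with $d$ odd --- impossible. Your proof via Teichm\"uller lifts, lines of the Fano plane on $\mathbb{F}_8^\times$, and the norm computation $N(\zeta_7^i+\zeta_7^j+\zeta_7^k)=8$ for a perfect difference set $\{i,j,k\}$ is correct (the case analysis on $\bar a+\bar b+\bar c=0$ is exhaustive, and the valuation-$1$ conclusion follows in both cases), but it is heavier and is tied to the residue field being $\mathbb{F}_8$ and to the explicit presence of $\zeta_7$; the paper's version is shorter and works verbatim for every unramified dyadic field of odd residue degree, which is the generality in which Lemma~\ref{le:dyadicI3} is stated. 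What your computation buys in exchange is a very concrete picture of why primitive solutions of $x^2+y^2+z^2\equiv0\pmod 4$ fail to exist in this particular completion.
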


The proof is given at the end of this section. To prepare for it, we need to examine the dyadic completion $\Odya$. We state our results more generally for the ring of integers $\Olocal$ in a dyadic local field $L$ with a few additional properties, since this generality makes the proofs clearer. We start with a useful observation:

\begin{observation}
Let $\Olocal$ be any commutative ring where $2$ is a prime element. Then the following statements are equivalent for $x,y \in \Olocal$:
 \begin{enumerate}
     \item $x \equiv y \pmod2$; \label{it:1}
     \item $x^2 \equiv y^2 \pmod4$; \label{it:2}
     \item $x^2 \equiv y^2 \pmod2$. \label{it:3}
 \end{enumerate}
Indeed, the implications (\ref{it:1}) $\Rightarrow$ (\ref{it:2}) $\Rightarrow$ (\ref{it:3}) are trivial. For (\ref{it:3}) $\Rightarrow$ (\ref{it:1}), one rewrites the condition (\ref{it:3}) as $2 \mid (x-y)^2$ and uses the fact that $2$ is a prime.
\end{observation}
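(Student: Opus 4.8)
The plan is to prove the equivalence by establishing a cycle of implications: from the first condition to the second, from the second to the third, and from the third back to the first. The first two steps hold in any commutative ring and use nothing about $2$ being prime. For the passage from $x \equiv y \pmod{2}$ to $x^2 \equiv y^2 \pmod{4}$, I would write $x = y + 2k$ with $k \in \Olocal$ and expand $x^2 - y^2 = 4ky + 4k^2$, which is visibly a multiple of $4$. The implication from $x^2 \equiv y^2 \pmod{4}$ to $x^2 \equiv y^2 \pmod{2}$ is immediate, since any multiple of $4$ is a multiple of $2$.

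The one implication carrying content is the return from $x^2 \equiv y^2 \pmod{2}$ to $x \equiv y \pmod{2}$, and it is here that primality is used. The key manipulation is to factor $x^2 - y^2 = (x-y)(x+y)$ and to observe that $x + y$ and $x - y$ differ by $2y$, hence are congruent modulo $2$; consequently $(x-y)(x+y) \equiv (x-y)^2 \pmod{2}$. Thus the hypothesis is equivalent to $2 \mid (x-y)^2$, and since $2$ is a prime element of $\Olocal$, divisibility of a square by $2$ forces $2 \mid (x-y)$, which is precisely the desired congruence.

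I anticipate no genuine difficulty. The only point requiring attention is to keep every step valid over an arbitrary commutative ring in which $2$ is prime, rather than invoking any field structure or any property special to $\Z$: the expansion uses only the ring axioms, and the final deduction uses only the defining property of a prime element applied to the product $(x-y)\cdot(x-y)$. This generality is exactly what is needed so that the observation can later be applied to the dyadic completion $\Odya$.
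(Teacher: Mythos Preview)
Your argument is correct and follows the same route as the paper: the two forward implications are dismissed as routine, and for the return implication you reduce $2 \mid x^2 - y^2$ to $2 \mid (x-y)^2$ and invoke primality of $2$, exactly as the paper does. Your extra sentence explaining why $(x-y)(x+y) \equiv (x-y)^2 \pmod 2$ just spells out what the paper leaves implicit.
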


This observation can further be exploited to get the following lemma:

\begin{lemma} \label{le:sumoftwosquares}
Let $\Olocal$ be the ring of integers of a dyadic local field $L$ where $2$ is a prime element and the degree $[L : \Q_2]$ is odd. Let $u,v \in \Olocal$ be units. Then:
 \begin{enumerate}
     \item $u^2 + v^2 \neq \square$;
     \item there are no $y,z \in \O$ such that $u^2+y^2+z^2 \equiv 0 \pmod4$.
 \end{enumerate}
\end{lemma}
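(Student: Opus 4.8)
The plan is to work in the dyadic completion $\Olocal$ and exploit that $[L:\Q_2]$ odd forces the residue field $\Olocal/2\Olocal$ to be a finite field of odd degree over $\F_2$, so in particular every element of the residue field is a square (the Frobenius $x \mapsto x^2$ is an automorphism). Combined with the Observation, this means that for any $w \in \Olocal$ there is an $x \in \Olocal$ with $x^2 \equiv w \pmod 2$, and hence the squares represent every residue class mod $2$. The key finer fact I would establish first is how squares of units behave modulo $8$: if $u$ is a unit, then $u \equiv 1 \pmod{2}$ is automatic only when the residue field is $\F_2$, which need not hold here, so instead I would compute $u^2 \bmod 8$ directly. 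Writing $u = a + 2b$ with $a,b \in \Olocal$ and $a$ a unit, one gets $u^2 = a^2 + 4ab + 4b^2 \equiv a^2 + 4(ab+b^2)\pmod 8$; since $a$ is a unit, $a^2$ is a unit, and the point is that $u^2 \equiv a^2 \pmod 4$ with $a^2$ ranging over squares of units in the residue ring $\Olocal/4$. The cleanest route is to show directly that $u^2 \equiv (\text{unit square}) \pmod 4$ and that $-1$ is not such a thing.

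For part (2) I would argue by contradiction: suppose $u^2 + y^2 + z^2 \equiv 0 \pmod 4$ with $u$ a unit. First reduce mod $2$: by the Observation, $u^2 \equiv u \cdot \bar u$-type reasoning — more precisely, squaring is injective mod $2$, so $y^2 \equiv \bar y$, $z^2 \equiv \bar z$ for the appropriate residues, and $u^2+y^2+z^2 \equiv 0 \pmod 2$ means the residue-field equation $\bar u^2 + \bar y^2 + \bar z^2 = 0$ holds in $\Olocal/2$. Since in characteristic $2$ this is $(\bar u + \bar y + \bar z)^2 = 0$, we get $\bar y + \bar z = \bar u$, i.e. exactly one of $y,z$ is a unit and the other is $\equiv u + (\text{unit}) \pmod 2$; after relabelling I may assume both $y$ and, say, $z$ are chosen so that $z \equiv u + y \pmod 2$. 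Then I lift: write $z = u' + y'$ appropriately and expand $u^2 + y^2 + z^2$ modulo $4$, using $x^2 \equiv \tilde x^2 \pmod 4$ whenever $x \equiv \tilde x \pmod 2$ (the Observation again). This should collapse the sum to $2u^2 + 2y^2 + 2uy \cdot(\text{something}) \pmod 4$, i.e. to $2(u^2 + y^2 + uy\cdot c)$ for a correction term, and then dividing by $2$ reduces everything to a statement about units modulo $2$: namely $u^2 + v^2 \equiv uv \pmod 2$ or similar being impossible, which again comes down to the residue field having no nontrivial cube roots of unity / to $\F_{2^k}$ with $k$ odd not containing $\F_4$. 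Part (1), $u^2 + v^2 \ne \square$, should then either follow from part (2) (if $u^2+v^2 = w^2$ then $w$ is a unit and $u^2 + v^2 + (\text{something})^2$ considerations give a mod-$4$ obstruction) or be proven in the same residue-field manner: mod $2$, $u^2 + v^2 = (\bar u + \bar v)^2$ is already a square, so the obstruction must come from a higher power of $2$; I would write $u^2 + v^2 \equiv (u+v)^2 \pmod 4$ using the Observation-style identity $u^2 + v^2 = (u+v)^2 - 2uv$, so $u^2 + v^2 \equiv (u+v)^2 + 2uv \pmod 4$, and if this equals $w^2$ then $w \equiv u+v \pmod 2$, so $w^2 \equiv (u+v)^2 \pmod 4$, forcing $2uv \equiv 0 \pmod 4$, i.e. $uv \equiv 0 \pmod 2$ — impossible since $u,v$ are units.

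Actually the last computation essentially settles part (1) cleanly, so I expect part (1) to be short. The main obstacle will be part (2): getting the bookkeeping right when reducing $u^2+y^2+z^2 \equiv 0 \pmod 4$ down to a residue-field identity, because here $y$ and $z$ need not be units and one must carefully track which of them is a unit (exactly one is, from the mod-$2$ analysis) and expand the non-unit one as $2 \times (\text{something})$. The crucial input at the end — that the residue field $\F_{2^k}$ with $k = [L:\Q_2]$ odd does not contain $\F_4$, equivalently that $x^2 + x + 1$ has no root there, equivalently that $a^2 + ab + b^2 = 0$ forces $a = b = 0$ for $a,b \in \F_{2^k}$ — is where the oddness hypothesis is used, exactly paralleling the Hilbert-reciprocity flavour of Theorem~\ref{th:odd>3}. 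Everything else is routine manipulation with the Observation.
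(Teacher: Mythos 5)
Your proposal is correct and follows essentially the same route as the paper: for (1) you use the Observation to upgrade $w\equiv u+v \pmod 2$ to $w^2\equiv (u+v)^2\pmod 4$ and extract $uv\equiv 0\pmod 2$, and for (2) you substitute $z^2\equiv(u+y)^2\pmod 4$ to reduce to $u^2+uy+y^2\equiv 0\pmod 2$, which fails because $\mathbb{F}_{2^d}$ with $d$ odd contains no root of $t^2+t+1$. The side remarks about computing $u^2\bmod 8$ and about ``exactly one of $y,z$ being a unit'' are unnecessary (and the latter is not quite accurate in a residue field larger than $\mathbb{F}_2$), but they play no role in the argument you actually carry out.
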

\begin{proof}
Assume that, contrary to the first statement, $u^2+v^2=w^2$ for some $w \in \Olocal$. Then $w^2 \equiv (u+v)^2 \pmod2$, so the previous observation yields $w^2 \equiv (u+v)^2 \pmod4$.

Plugging this into $u^2+v^2 = w^2$ gives $2uv \equiv 0 \pmod4$. This is equivalent to $uv \equiv 0 \pmod2$, which cannot happen since $u,v$ are units. Thus the first part is proven.

Similarly, if the second statement is violated, we write $z^2 \equiv (u+y)^2 \pmod2$, so the above observation yields $z^2 \equiv (u+y)^2 \pmod4$. Plugging in, we get $2u^2 + 2y^2 + 2uy \equiv 0 \pmod4$. This is equivalent to $u^2 + uy + y^2 \equiv 0 \pmod2$; as $u$ is a unit, it follows that there exists a root of $t^2+t+1$ modulo $2$, i.e.\ in $\Olocal/(2)$.

Since $2$ is the prime element, $\Olocal/(2)$ is the residue field. It is isomorphic to $\mathbb{F}_{2^d}$, where $d=[L:\Q_2]$ is by assumption an odd number. However, this is a contradiction -- no root of an irreducible quadratic polynomial over $\mathbb{F}_2$ can be contained in an extension of odd degree.
\end{proof}

We were building towards the following characterisation of sums of three squares in~$\Odya$ and more generally in the ring of integers in any unramified dyadic local field of odd degree:

\begin{lemma} \label{le:dyadicI3}
Let $L$ be any dyadic local field where $2$ is a prime element and the degree $[L : \Q_2]$ is odd. Then, over its ring of integers $\Olocal$, the form $I_3$ represents exactly those elements which are not equal to $-t^2$ for any $t \in \Olocal$.
\end{lemma}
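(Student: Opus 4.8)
The plan is to prove the two inclusions separately. First, the easy direction: if $\alpha = -t^2$ for some $t \in \Olocal$, then $\alpha$ cannot be represented by $I_3$. If $t$ were a unit, then $\alpha + t^2 = 0$ would exhibit a sum of three squares (the three being the representation of $\alpha$ plus $t^2$) vanishing with one of the four summands a unit square, which contradicts part (2) of Lemma \ref{le:sumoftwosquares} after reducing mod $4$; more directly, if $\alpha = x^2+y^2+z^2 = -t^2$ then $x^2+y^2+z^2+t^2 \equiv 0 \pmod 4$, and by Lemma \ref{le:sumoftwosquares}(2) this forces all of $x,y,z,t$ to be non-units, i.e.\ divisible by $2$; dividing through by $4$ we may descend, and since the valuation of $\alpha$ can only decrease finitely often we reach a contradiction (or we land at $\alpha$ a unit times $-\square$ and apply the lemma directly). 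For the general case where $t$ need not be a unit, one writes $t = 2^k t_0$ with $t_0$ a unit and $\alpha = -4^k t_0^2$; by the same descent argument on a putative representation $\alpha = x^2+y^2+z^2$, all $x,y,z$ are divisible by $2^k$, and after dividing we are reduced to the unit case.

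For the main direction, suppose $\alpha \in \Olocal$ is not of the form $-t^2$; we must represent it by $I_3$. I would first reduce to the case where $\alpha$ is a unit: writing $\alpha = 4^k \beta$ with $\beta$ either a unit or $2$ times a unit, and noting that $I_3$ represents $\alpha$ iff it represents $4^k\beta$ iff it represents $\beta$ (squares rescale by $4^k$), and that $\alpha$ is $-\square$ iff $\beta$ is $-\square$, it suffices to treat $\alpha$ a unit or $\alpha = 2v$ with $v$ a unit. For $\alpha = 2v$ a uniformizer times a unit: since $2$ is prime, $\Olocal/(8)$ is small enough to check directly, or better, one uses that over a dyadic local field every element of even valuation that is a norm/appropriately congruent is a sum of squares — here I would invoke the local theory (O'Meara, \cite[\S 63]{OMeara}): the form $I_3$ over $\Olocal$ has Hasse invariant and discriminant that one computes, and $I_3$ is isotropic over $L$ precisely because $-1$ is a sum of two squares in any local field of residue characteristic $2$ with residue field $\mathbb{F}_{2^d}$, $d$ odd — wait, that needs care, so instead I would argue: $I_3 \cong \langle 1,1,1\rangle$, and since $-1 \equiv u^2+v^2$ is \emph{false} by Lemma \ref{le:sumoftwosquares}(1), the form $I_2 = \langle 1,1\rangle$ is anisotropic, hence $I_3$ is anisotropic over $L$; thus $I_3$ does not represent $0$ nontrivially, which is consistent with the statement since $-t^2$ with $t$ a unit is exactly the obstruction to $\langle 1,1,1\rangle$ representing $-t^2$, equivalently to $\langle 1,1,1,1\rangle$ being isotropic.

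The substantive step is therefore: an anisotropic ternary form over a dyadic local field represents every unit (and every uniformizer times a unit) except those forced out by the one congruence obstruction. Concretely, I would show that for $\alpha$ a unit with $\alpha \neq -t^2$, the equation $\alpha = x^2 + y^2 + z^2$ has a solution in $\Olocal$ by a Hensel-type argument: reduce mod $4$ and produce an approximate solution, then lift. The reduction mod $4$ is governed by the observation that squares of units in $\Olocal/(4)$ are exactly the classes $\equiv (\text{unit})^2$, and one checks by a short case analysis on $\Olocal/(4)$ (using that the residue field $\mathbb{F}_{2^d}$ has $d$ odd, so $t^2+t+1$ has no root, which controls which unit classes are sums of two squares) that every unit $\alpha$ with $\alpha \not\equiv -\square \pmod{4}$ is $\equiv x^2+y^2+z^2 \pmod{8}$ with at least one variable a unit; then the nonsingular directional derivative $2x$ has valuation $1$, and the standard dyadic Hensel's lemma (the version requiring $\mathrm{val}(f(a)) > 2\,\mathrm{val}(f'(a))$) applies to solve exactly. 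The main obstacle I anticipate is making this mod-$8$ lifting clean: the quadratic $f(x) = x^2 + (y^2+z^2-\alpha)$ has $f'(x) = 2x$, so one needs the residue $y^2+z^2-\alpha$ to be divisible by a sufficiently high power of $2$ relative to $\mathrm{val}(2x)=1$, i.e.\ divisible by $8$; arranging that, while keeping one variable a unit, is exactly where the hypothesis "$d$ odd" and "$\alpha \neq -\square$" get used together, via Lemma \ref{le:sumoftwosquares}. I would present this as: (i) the easy direction and reductions above; (ii) the claim that the image of $I_3$ on units mod $8$ is everything except the classes $-u^2$; (iii) dyadic Hensel lifting from a mod-$8$ solution; with step (ii) being the heart of the matter and the place where a careful but elementary computation in $\Olocal/(8)$ is unavoidable.
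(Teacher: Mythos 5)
Your proposal does not complete the proof, and it also contains a concrete error in the ``easy'' direction.

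First, the gap. Your main direction rests entirely on step (ii): the claim that the image of $I_3$ on units modulo $8$ consists of all classes except the classes $-u^2$. You yourself call this ``the heart of the matter'' and leave it as ``a careful but elementary computation in $\Olocal/(8)$''. But $\Olocal/(8)$ has $8^d$ elements with $d=[L:\Q_2]$ an arbitrary odd number, so this is not a finite check; it needs a structural argument in the residue field $\mathbb{F}_{2^d}$ (along the lines of Lemma \ref{le:sumoftwosquares}, where the irreducibility of $t^2+t+1$ over $\mathbb{F}_2$ is exploited), and no such argument is given. Since everything else in your main direction (the reduction to units, the Hensel lifting with $\mathrm{val}(f'(x))=1$) is routine, the one step you omit is precisely the one carrying the content of the lemma. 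As written, the statement is asserted rather than proved.

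Second, the error. In the easy direction you deduce from $x^2+y^2+z^2+t^2\equiv 0\pmod 4$, via Lemma \ref{le:sumoftwosquares}(2), that all of $x,y,z,t$ are non-units. That lemma concerns a sum of \emph{three} squares, one of them a unit square, vanishing modulo $4$; it says nothing about four squares, and indeed $1^2+1^2+1^2+1^2=4\equiv 0\pmod 4$ with all four summands units (already in $\Z_2$). So your descent does not start. It could be repaired by working modulo $8$, but that again lands you in the uncompleted computation above.

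For comparison, the paper's proof avoids both issues by a different reduction: it first shows that any representation of an element of $\Olocal$ as a sum of three squares of elements of the \emph{field} $L$ automatically has integral entries --- after clearing denominators one would need $x^2+y^2+z^2\equiv 0\pmod 4$ with $x$ a unit, which Lemma \ref{le:sumoftwosquares}(2) forbids. This reduces the integral representation problem to representation over $L$, where $I_3$ is anisotropic with square determinant, and the classification \cite[63:21]{OMeara} of elements represented by an anisotropic ternary form over a local field gives both directions at once: the non-represented elements are exactly the classes $-\square$. No Hensel lifting and no computation modulo $8$ is needed. If you want a self-contained argument along your lines, you must actually carry out step (ii); otherwise, the cleanest fix is to adopt the integrality reduction and cite the field-level result.
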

\begin{proof}
Bear in mind that a square is integral if and only if the squared number is integral. Now, the proof has two parts. First we show that if a number in $\Olocal$ is a sum of three squares in $L$, then these squares are necessarily integral. Afterwards we use the known results about representations over a local field.

Let us examine whether a sum of three non-integral squares can belong to $\Olocal$. We use the fact that $2$ is the prime element, and multiply the equality by a suitable power of $2$, to reformulate the question as follows: Is it possible to find $x,y,z \in \Olocal$, with $x$ being a unit, so that $x^2+y^2+z^2 \equiv 0 \pmod4$? By Lemma \ref{le:sumoftwosquares} (2), the answer is no.

We have just shown that if the sum of three squares is integral, then the squared numbers are integral as well (which, as a corollary, also yields that $0$ is not non-trivially represented, i.e.\ $I_3$ is anisotropic). Therefore, we now only have to examine the numbers represented by $I_3$ over the field $L$ instead of over the ring $\Olocal$. Since $I_3$ is anisotropic and its determinant is a square, \cite[63:21]{OMeara} yields that the only not-represented elements are minus squares.
\end{proof}

With this dyadic preparation, we are ready to characterise sums of three squares in $\O$.

\begin{proof}[Proof of Proposition \ref{pr:localglobal}]
By Theorem \ref{th:peters}, the form $I_3$ has class number $1$, so it satisfies the local-global principle. Thus, it suffices to check which numbers are represented locally.

In the real embeddings, every positive number is already a square, while negative numbers cannot be written as any number of squares. In a non-dyadic completion, every ternary unimodular form is universal \cite[92:1b]{OMeara}. Thus it remains to examine the only dyadic place $\Odya$. This was done in Lemma \ref{le:dyadicI3} -- this lemma applies since $(2)$ is inert, so $[\Kivix_{(2)} : \Q_2]$ is equal to $[\Kivix : \Q]$, which is odd.
\end{proof}

\section{The main proof}

Now we fully understand sums of three integral squares. To complete the proof that every sum of squares in $\Z[\zetazeta]$ is already a sum of four squares, one has to examine two types of problematic elements: Those which are equal to minus square of a unit in $\Odya$, and those which are minus square of a number divisible by $2$ in $\Odya$. To deal with the latter type, one simple trick suffices:

\begin{observation} \label{ob:2multiples}
Let $\alpha \in \O$ be totally positive and let it be equal to $-(2t)^2$ at the dyadic place. Then $\ell(\alpha)=4$, i.e.\ it is a sum of four but not of three squares.

Indeed, the number $\alpha/2$ is totally positive and it is clearly not equal to $-\square$ at the dyadic place, hence by Proposition \ref{pr:localglobal} we have $\alpha/2 = x^2 + y^2 + z^2$ in $\O$. Then $\alpha = (x+y)^2 + (x-y)^2 + z^2 + z^2$. On the other hand, by the same proposition, $\alpha$ is not a sum of three squares.
\end{observation}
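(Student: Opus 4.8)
The plan is to read this off from Proposition \ref{pr:localglobal} together with the classical identity $2(a^2+b^2)=(a+b)^2+(a-b)^2$. Two things need checking: that $\alpha$ is not a sum of three integral squares, and that it is a sum of four. The first is immediate from Proposition \ref{pr:localglobal}: $\alpha$ is totally positive, hence nonzero, and by hypothesis it equals $-(2t)^2$ at $\Odya$, i.e.\ it has the forbidden shape $-\square$ at the dyadic place, so condition~(2) of the proposition fails and $\alpha$ is not represented by $I_3$ over $\O$.

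For the second part, I would halve $\alpha$, apply the three-squares result, and double back up. First, $\alpha/2$ lies in $\O$: the hypothesis forces the dyadic valuation of $\alpha$ to be at least~$2$ (it is the valuation of $(2t)^2$, and $t\neq 0$ because $\alpha\neq 0$), so $(2)\mid\alpha$, and since $(2)$ is the only prime above~$2$ in the PID $\O$ this gives $2\mid\alpha$. Next, $\alpha/2$ is totally positive, and at $\Odya$ it equals $-2t^2$, which cannot be of the form $-\square$: an equality $2t^2=s^2$ would equate an odd valuation with an even one (equivalently, $2$ is a uniformizer and hence not a square). So Proposition \ref{pr:localglobal} applies to $\alpha/2$ and yields $\alpha/2=x^2+y^2+z^2$ with $x,y,z\in\O$; then $\alpha=2x^2+2y^2+2z^2=(x+y)^2+(x-y)^2+z^2+z^2$ is a sum of four integral squares. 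Combining the two parts gives $\ell(\alpha)=4$.

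There is no real obstacle here. The only points needing a word of justification are that $\alpha/2\in\O$ and that $\alpha/2$ avoids the shape $-\square$ at the dyadic place, both of which are one-line valuation computations; the whole content is the doubling identity, which upgrades the already-established three-squares description to a four-squares statement for exactly these problematic elements.
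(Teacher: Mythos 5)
Your argument is correct and is essentially identical to the paper's own proof: divide by $2$, apply Proposition \ref{pr:localglobal} to $\alpha/2$, and use the identity $2(x^2+y^2+z^2)=(x+y)^2+(x-y)^2+z^2+z^2$, with non-representability by $I_3$ following from the same proposition. The only difference is that you spell out why $\alpha/2\in\O$ and why $\alpha/2$ is not $-\square$ dyadically, which the paper leaves as ``clearly''.
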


As a side note, one could also use the universality of the quadratic form $x^2 + y^2 + z^2 + w^2 + xw + yw + zw$ to deduce that every totally positive number in $2\O$ is a sum of four squares, since $2(x^2+y^2+z^2+w^2+xw+yw+zw) = (x+y+w)^2+(x-y)^2+(z+w)^2+z^2$. Either way, combining Proposition \ref{pr:localglobal} with the above observation, we obtain the following neat statement, which is the basis of the proofs of Theorems \ref{th:main} and \ref{th:characterisation}:

\begin{corollary} \label{co:minusunitbad}
If a totally positive number in $\O$ is not a sum of four integral squares, then in $\Odya$ it is equal to $-u^2$ where $u$ is a unit.
\end{corollary}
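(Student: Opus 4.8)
The plan is to read this off from Proposition~\ref{pr:localglobal} and Observation~\ref{ob:2multiples} with essentially no extra work. First I would take a totally positive $\alpha \in \O$ that is not a sum of four integral squares; then in particular it is not a sum of three integral squares. Since $\alpha$ is totally positive, it satisfies condition~(1) of Proposition~\ref{pr:localglobal}, so the failure to be represented by $I_3$ over $\O$ must come from a failure of condition~(2): at the dyadic place $\Odya$ we have $\alpha = -t^2$ for some $t \in \Odya$.

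Next I would pin down $t$ by a valuation argument. As $\alpha$ is totally positive it is nonzero, and the embedding $\O \hookrightarrow \Odya$ is injective, so $t \neq 0$; since $2$ is the prime element of the discrete valuation ring $\Odya$, one can write $t = 2^k u$ with $k \geq 0$ and $u$ a unit of $\Odya$. If $k \geq 1$, then in $\Odya$ we get $\alpha = -\bigl(2\cdot 2^{k-1}u\bigr)^2 = -(2t')^2$ with $t' = 2^{k-1}u \in \Odya$, and Observation~\ref{ob:2multiples} then forces $\ell(\alpha) = 4$ -- so $\alpha$ \emph{is} a sum of four squares, contradicting the hypothesis. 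Hence $k = 0$, $t = u$ is a unit, and $\alpha = -u^2$ in $\Odya$, which is exactly the assertion.

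There is no real obstacle here: the statement is pure bookkeeping built on the two quoted results. The only point worth flagging is that the dyadic failures of condition~(2) split into two genuinely different cases -- $-\square$ of a unit versus $-\square$ of a $2$-multiple -- and that Observation~\ref{ob:2multiples} disposes of the second for free, isolating the first as the case that will demand the actual work later (the numbers that equal $-u^2$ at $\Odya$ with $u$ a unit). Everything in the present corollary reduces to noting $\alpha \neq 0$ and applying that split.
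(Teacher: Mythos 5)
Your argument is correct and is exactly the one the paper intends: the corollary is stated as an immediate consequence of Proposition~\ref{pr:localglobal} (forcing $\alpha=-t^2$ dyadically) combined with Observation~\ref{ob:2multiples} (ruling out the case $2\mid t$), which is precisely your case split on the valuation of $t$. Your added remarks on $t\neq 0$ and the unit/non-unit dichotomy are just the bookkeeping the paper leaves implicit.
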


Finally, we are ready to prove that the Pythagoras number of $\O$ is $4$:

\begin{proof}[Proof of Theorem \ref{th:main}]
Let $\alpha \in \sum \O^2$. Clearly, $\alpha$ is totally non-negative, so in most cases Corollary \ref{co:minusunitbad} applies and $\alpha$ is a sum of four squares. It remains to deal with the case when at the dyadic place, $\alpha = -u^2$ with $u$ a unit. In this case, the condition that $\alpha$ is totally positive is not enough for its being a sum of squares (compare Theorem \ref{th:characterisation}). However, we know that $\alpha = \sum x_i^2$ in $\O$ for some $x_i$. We shall show that if $x_j$ is a dyadic unit, then $\alpha - x_j^2$ is a sum of three squares in $\O$, implying that $\alpha$ is a sum of four squares. This will conclude the proof, since at least one of the summands must be a unit.

For the sake of contradiction, suppose that $\sum_{i \neq j} x_i^2$ is not a sum of three squares in $\O$. It is totally positive, so by Proposition \ref{pr:localglobal} it must be $-\square$ in $\Odya$. However, the equality $-u^2 = x_j^2 - \square$ in $\Odya$ is impossible by Lemma \ref{le:sumoftwosquares} (1) since $u$ a $x_j$ are units.
\end{proof}

\section{Characterisation of sums of squares} \label{se:characterisation}

In this section we are going to determine which numbers can be written as a sum of squares (by Theorem \ref{th:main} they can in fact be represented as a sum of \emph{four} squares). To achieve this, we shall need the following notion: A totally positive number in $\O$ is called \emph{indecomposable} if it cannot be written as a sum of two totally positive elements of $\O$.

The indecomposable elements are quite difficult to study; it is a significant success that \cite[Th.\ 1.2]{KT} fully characterises them for the order $\Z[\rho_a]$ in the simplest cubic fields. For our case, their result reads as follows (recall that $\rho = \zetazeta$):

\begin{lemma}[Kala, Tinková]
Up to multiplication by squares of units, the only indecomposables of $\O$ are $1$ and $1+\rho+\rho^2$.
\end{lemma}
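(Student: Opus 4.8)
The plan is to invoke the classification of indecomposables in the simplest cubic fields from \cite[Th.\ 1.2]{KT} and specialize it to $a=-1$, since $\Kivix = \Q(\rho_{-1})$ and $\O = \Z[\rho_{-1}] = \Z[\rho]$ with $\rho = \zetazeta$. In \cite{KT}, the indecomposables of $\Z[\rho_a]$ are given explicitly as an (infinite) list of elements, typically parametrized as integer linear combinations $x + y\rho_a + z\rho_a^2$ whose coefficients follow a prescribed pattern depending on $a$; one also has to account for the action of the unit group, which for the simplest cubic fields is generated (up to torsion $\pm1$) by $\rho_a$ and $\rho_a+1$ (or an equivalent pair). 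The first step is therefore to write down what that explicit list degenerates to when $a=-1$.

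Concretely, I would substitute $a=-1$ into the defining polynomial $x^3 - ax^2 - (a+3)x - 1$, obtaining $x^3 + x^2 - 2x - 1$, which matches the stated minimal polynomial of $\rho$, confirming the identification $\rho = \rho_{-1}$. Then I would track the finitely many ``shapes'' of indecomposables from \cite{KT}: for generic large $a$ there are several families, but for the smallest values of $a$ most of these families either collapse, coincide with each other, or become associates of $1$ under multiplication by a square of a unit. The claim is that after this collapse only two orbits survive: the orbit of $1$ and the orbit of $1 + \rho + \rho^2$. Checking that $1+\rho+\rho^2$ is genuinely indecomposable (not a sum of two totally positive elements) and that it is not a square up to unit squares can be done directly — e.g.\ by noting its norm is $7$ (a computation: the constant term of the minimal polynomial of $1+\rho+\rho^2$, or the product of its three real conjugates), which is prime and too small to factor as a product of norms of two totally positive non-units; this also shows it is not a square since $7$ is not a perfect square.

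The main obstacle is bookkeeping rather than conceptual: one must carefully read off from \cite[Th.\ 1.2]{KT} the precise set of indecomposables and the precise unit-square equivalence being used, and verify that none of the $a$-dependent families contribute a genuinely new orbit at $a=-1$. A secondary point to be careful about is that \cite{KT} works with the order $\Z[\rho_a]$, which for general $a$ need not be the maximal order, but for $a=-1$ we do have $\Z[\rho] = \O_{\Kivix}$ (the discriminant is $49$, squarefree away from the $7$, and $\disc(x^3+x^2-2x-1) = 49$ equals the field discriminant), so the cited result applies verbatim to the ring of integers. Once the list is pinned down and the unit action is quotiented out, the statement follows immediately; the totally-positive-generator and non-square checks for the single nontrivial orbit are the only genuine verifications, and both reduce to the norm computation $\N(1+\rho+\rho^2) = 7$.
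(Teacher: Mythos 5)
Your route is the same as the paper's: the paper gives no independent proof of this lemma, but simply cites \cite[Th.\ 1.2]{KT} and specialises it to $a=-1$, exactly as you propose (including the check that $x^3+x^2-2x-1$ is the $a=-1$ member of Shanks's family, so that $\rho=\rho_{-1}$ and $\Z[\rho]=\O$). One caution about your proposed ``direct'' cross-check, though: indecomposability is an \emph{additive} condition, so the fact that $1+\rho+\rho^2$ has prime norm $7$ does not by itself rule out a decomposition $1+\rho+\rho^2=\beta+\gamma$ with $\beta,\gamma$ totally positive --- norms are not additive under such sums, and elements of prime norm can perfectly well be decomposable (in $\Z$ every integer $\geq 2$ is a sum of two positive integers). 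The norm computation does legitimately show that the element is not a square up to unit squares, but its indecomposability must come from \cite{KT} itself (or from a genuine trace/geometry-of-numbers argument), not from the norm being ``too small to factor as a product''. Since your main line of argument is the citation, this flaw only affects the optional verification, not the proof as a whole.
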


We use this lemma to provide a very explicit description of the set $\sum \O^2$, namely: a totally positive $\alpha$ is an exceptional element if and only if it satisfies the simple condition \ref{c} or equivalently \ref{d}. However, note that the core of the proof does not use the indecomposable elements explicitly -- without the lemma, we would still have the equivalence of \ref{a} (and \ref{b}) with \ref{e}.

\setcounter{section}{2}
\setcounter{theorem}{2}

\begin{theorem}
Let $\alpha \in \O$ be totally positive. Then the following statements are equivalent:
 \begin{enumerate}
     \item[(1a)] $\alpha \notin \sum \O^2$.
     \item[(1b)] $\alpha$ is not a sum of four integral squares.
     \item[(2a)] The norm of $\alpha$ is $7$.
     \item[(2b)] $\alpha = u^2(1+\rho+\rho^2)$ for a unit $u \in \O$.
     \item[(2c)] $\alpha$ is an indecomposable element and not a square.
 \end{enumerate}
\end{theorem}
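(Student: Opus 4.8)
The plan is to prove the theorem as a cycle of implications, using the hard analytic input (Theorem~\ref{th:peters}) and the local machinery (Proposition~\ref{pr:localglobal} and Corollary~\ref{co:minusunitbad}) already established, together with the Kala--Tinková classification of indecomposables. Since the equivalence of \ref{a} and \ref{b} is immediate from the main theorem (an element fails to be a sum of squares iff it fails to be a sum of four squares, because by Theorem~\ref{th:main} any sum of squares is a sum of four), I would organise the remaining work around the chain $\ref{b}\Rightarrow\ref{c}\Rightarrow\ref{d}\Rightarrow\ref{e}\Rightarrow\ref{b}$, or whatever ordering makes each single arrow cheapest.

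First I would treat $\ref{b}\Rightarrow\ref{c}$. By Corollary~\ref{co:minusunitbad}, a totally positive $\alpha$ which is not a sum of four squares must be $-u^2$ (times a unit square, i.e.\ of the form $-u^2$) in $\Odya$; equivalently $\alpha$ is a unit at the dyadic place, so $2 \nmid N(\alpha)$, and moreover $-\alpha$ is a dyadic square, which forces a congruence condition on $N(\alpha)$ modulo $8$ (namely $N(\alpha)$ is a norm of a dyadic unit times $-1$ being a square, giving $N(\alpha)\equiv 7 \pmod 8$ or similar). Combined with total positivity, which bounds $N(\alpha)$ from below by the minimum of the norm form on the totally positive cone, and with the Minkowski-type bound $N(\alpha)\le$ (something small) coming from $\alpha$ being "small" — here is where I expect to lean on the explicit arithmetic of $\O=\Z[\rho]$ with $\rho^3+\rho^2-2\rho-1=0$, whose units and totally positive elements of small norm are fully understood — one pins down $N(\alpha)=7$. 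Concretely, $1+\rho+\rho^2$ has norm $7$ and is the unique totally positive element of norm $<$ the next available value up to units, and no totally positive element can have norm strictly between $1$ and $7$ that is simultaneously a dyadic minus-square; this is the step requiring the most hands-on computation.

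The arrow $\ref{c}\Rightarrow\ref{d}$ is a finiteness/class-number-one statement: since $\O$ has class number $1$ and finitely many elements of each norm up to units, one checks that the only totally positive elements of norm $7$ up to multiplication by \emph{squares} of units are $1+\rho+\rho^2$ (and possibly $(1+\rho+\rho^2)$ times a non-square unit, which one must rule out or absorb — this is handled by the unit structure of $\O$, whose totally positive unit group modulo squares is trivial or order $2$, easily determined). Then $\ref{d}\Rightarrow\ref{e}$: $1+\rho+\rho^2$ is indecomposable by the Kala--Tinková lemma and is not a square (e.g.\ by looking at its value under one embedding, or because its norm $7$ is not a perfect square), and both properties are preserved under multiplication by $u^2$. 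Finally $\ref{e}\Rightarrow\ref{b}$: by the Kala--Tinková lemma an indecomposable which is not a square is $u^2(1+\rho+\rho^2)$, and one verifies directly — computing $N(1+\rho+\rho^2)=7\equiv 7\pmod 8$ and checking that $-(1+\rho+\rho^2)$ is a square in $\Odya$ (a single Hensel-lemma computation in the residue field $\mathbb{F}_8$, using $[\Kivix_{(2)}:\Q_2]=3$ odd) — that it is $-u^2$ dyadically, whence by Proposition~\ref{pr:localglobal} not a sum of three squares, and by an argument as in the proof of Theorem~\ref{th:main} not a sum of four either. The main obstacle is the bookkeeping in $\ref{b}\Rightarrow\ref{c}$: showing no totally positive $\alpha$ other than the unit-square multiples of $1+\rho+\rho^2$ can be a dyadic minus-square, which amounts to controlling totally positive elements of $\O$ of small norm together with the dyadic square condition.
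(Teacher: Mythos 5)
Your overall architecture (a cycle of implications leaning on Corollary \ref{co:minusunitbad}, the dyadic Lemma \ref{le:sumoftwosquares} and the Kala--Tinkov\'a classification) assembles the right toolkit, but the arrow you yourself identify as the hard one, (1b)$\Rightarrow$(2a), has a genuine gap that your sketch cannot close. You propose to pin down $N(\alpha)=7$ by combining the dyadic condition that $\alpha=-u^2$ in $\Odya$ with a lower bound and a ``Minkowski-type'' upper bound on $N(\alpha)$. But there is no such upper bound: nothing about failing to be a sum of four squares makes $\alpha$ small, and the set of totally positive elements that are dyadic minus-squares of units has unbounded norm. Already $7\in\Z\subset\O$ satisfies $-7\equiv1\pmod 8$, hence is $-\square$ in $\Odya$, has norm $343$, and nevertheless has length $4$ by Tinkov\'a's computation cited in the introduction. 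So the dyadic condition is only \emph{necessary} for failing to be a sum of four squares, and no amount of bookkeeping over elements of small norm will show that all the large-norm dyadic minus-squares are in fact sums of squares. The paper closes exactly this gap by routing through (2c) instead of (2a): one shows that every \emph{decomposable} totally positive $\alpha=\beta+\gamma$ is a sum of squares, by applying Corollary \ref{co:minusunitbad} to $\beta$ and $\gamma$ and observing via Lemma \ref{le:sumoftwosquares}\,(1) that two dyadic unit squares cannot sum to a square (the mixed case, where $\beta$ is a sum of squares and $\gamma$ is not, is handled by splitting off a dyadic unit square $x_1^2$ from $\beta$ and showing $x_1^2+\gamma$ cannot again be $-u^2$ with $u$ a unit). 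Only after that does the Kala--Tinkov\'a lemma enter, to translate ``indecomposable and not a square'' into the concrete descriptions (2a) and (2b). The norm computation you envisage only works in the easy direction, from (2b) to (2a).

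A smaller point: in your step (2c)$\Rightarrow$(1b), the Hensel computation showing that $-(1+\rho+\rho^2)$ is a square in $\Odya$ only rules out \emph{three} squares via Proposition \ref{pr:localglobal}; it cannot rule out four, since this element is a sum of four squares everywhere locally, and the proof of Theorem \ref{th:main} provides no mechanism for excluding specific elements. The correct (and much shorter) argument is the one indecomposability hands you for free: a sum of two or more nonzero squares is a sum of two totally positive elements, so an indecomposable element is either a single square or not a sum of squares at all.
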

\begin{proof}
The new input in this proof will be the equivalence between \ref{a} and \ref{e}. Most of the other implications are clear by now:

By Theorem \ref{th:main}, \ref{a} and \ref{b} are equivalent. Statements \ref{d} and \ref{e} are equivalent by the previous lemma. The equivalence of \ref{c} and \ref{d} is elementary: On the one hand, one easily checks (or looks up in \cite{KT}) that $1+\rho+\rho^2$ has norm $7$; on the other hand, since $(7)$ ramifies, any two (totally positive) elements of norm $7$ generate the same ideal, hence differ only by multiplication by a (totally positive) unit. It is well known \cite{Ti} that in simplest cubic fields, a unit is totally positive if and only if it is a square.

The implication from \ref{e} to \ref{a} is also immediate: An indecomposable element is either a square or not a sum of squares. Hence our task is to prove the converse: If $\alpha$ is decomposable, then $\alpha$ is a sum of squares. In the following, we will repeatedly exploit Corollary \ref{co:minusunitbad}: If a totally positive number is not a sum of squares, then in $\Odya$ it must be equal to $-u^2$ where $u$ is a unit.

Assume that $\alpha = \beta + \gamma$. If both $\beta$ and $\gamma$ are sums of squares, then so is $\alpha$. If neither of them is a sum of squares, then at the dyadic place, $\alpha$ is $-(u_1^2+u_2^2)$ where $u_1,u_2$ are units. However, by Lemma \ref{le:sumoftwosquares} (1) the sum of two unit squares is not a square in $\Odya$, hence $\alpha$ is a sum of squares in $\O$.

Hence we may assume that $\beta = \sum x_i^2$ and $\gamma$ is not a sum of squares. We also may assume that $x_1$ is a unit in $\Odya$; if not, we use the equality $(2x)^2=x^2+x^2+x^2+x^2$. Clearly it suffices to show that $\tilde{\gamma} = x_1^2 + \gamma$ is a sum of squares. And this is easy, since Lemma \ref{le:sumoftwosquares} (1) gives a contradiction if both $\gamma$ and $\tilde{\gamma}$ are minus squares of units in $\Odya$.
\end{proof}

\setcounter{section}{5}
\setcounter{theorem}{1}

The key idea behind the above proof is that while not every indecomposable element is a sum of squares, the sum of two indecomposables in $\O$ can always be rewritten as a sum of squares. With the just proven theorem, one immediately gets Corollary \ref{co:universal}: The quadratic form $x_1^2+x_2^2+x_3^2+x_4^2$ represents all totally positive elements except for those of the form $u^2(1+\rho+\rho^2)$, and these are represented by $(1+\rho+\rho^2)x_5^2$.

\section{Final notes}

The crucial ingredient of our proofs was the fact that the sum of three squares $I_3$ satisfies the local-global principle. The same arguments in fact prove that $\P(\O_K)=4$ for every totally real field $K$ of odd degree where $(2)$ is inert, \emph{provided that $I_3$ integrally represents all numbers which it integrally represents everywhere locally}, but the only straightforward way to prove this integral local-global principle is proving $h(I_3)=1$.

Therefore, the only totally real fields where it is reasonably simple to show that $\P(\O_K)$ is small are the six fields listed in Theorem \ref{th:peters} as the only fields with $h(I_3)=1$. Let us discuss them briefly:

For $\Q$, we have $\P(\Z)=4$ by Lagrange's four square theorem. For $\Q(\sqrt2)$ and $\Q(\sqrt5)$ it is well known \cite{Dz, Ma} that $\P(\O_K)=3$; the proofs are based on the local-global principle. For $\Q(\sqrt{17})$, the situation is different: $\P\bigl(\Z\bigl[\frac{1+\sqrt{17}}{2}\bigr]\bigr)=5$, which is the largest value allowed for quadratic orders by the bound $g_{\Z}(2)$; this is easily proven by checking that $7 + \bigl(\frac{1+\sqrt{17}}{2}\bigr)^2$ is not a sum of four squares. Actually, by \cite[Th.\ 5'']{CP}, there are many different elements of length $5$ in $\Z\bigl[\frac{1+\sqrt{17}}{2}\bigr]$: Any totally positive element of the norm $128\cdot 4^k$ has length $5$.

The only remaining field is $K^{(148)}$, the cubic field with discriminant $148$. There, the same strategy as in this paper might be applied -- first, one proves an analogy of Proposition~\ref{pr:localglobal}, characterising sums of three squares, and then hopefully exploits this result to prove $\P(\O_{K^{(148)}})=4$ or $\P(\O_{K^{(148)}})=5$. It should not be too difficult, but the situation is slightly complicated by the fact that $(2)$ ramifies, so the behaviour at the dyadic place is less simple than in our Lemma \ref{le:dyadicI3}. For example, total positivity is no longer the only local condition for a sum of squares.


\begin{thebibliography}{9999} 



\bibitem[BGV]{BGV} K. J. Becher, D. Grimm and J. Van Geel, \textit{Sums of squares in algebraic function fields over a complete discretely valued field}. Pacific J. Math. 276 (2), 257--276 (2014).

\bibitem[BL]{BL} K. J. Becher and D. B. Leep, \textit{The length and other invariants of a real field}. Math. Z. 269, 235--252 (2011).

\bibitem[BV]{BV} K. J. Becher and J. Van Geel, \textit{Sums of squares in function fields of hyperelliptic curves}, Math. Z. 261, 829--844 (2009).






\bibitem[CP]{CP} H. Cohn, G. Pall, \textit{Sums of four squares in a quadratic ring}, Trans. Amer. Math. Soc. 105, 536–556 (1962).


\bibitem[Dz]{Dz} J. Dzewas, \textit{Quadratsummen in reell-quadratischen Zahlkörpern}, Math. Nachr. 21, 233--284 (1960).

\bibitem[EK]{EK} A. Earnest, A. Khosravani. \textit{Universal positive quaternary quadratic lattices over totally real number fields}, Mathematika 44(2), 342--347 (1997).


\bibitem[Ho]{Ho} D. W. Hoffmann, \textit{Pythagoras numbers of fields},  J. Amer. Math. Soc. 12 (3), 839--848 (1999).

\bibitem[Hu]{Hu} Y. Hu, \textit{The Pythagoras number and the $u$-invariant of Laurent series fields in several variables}, J. Algebra 426, 243--258 (2015).






\bibitem[KO]{KO} M.-H. Kim, B.-K. Oh, \textit{Bounds for quadratic Waring’s problem}, Acta Arith. 104, 155--164 (2002).



\bibitem[Kr]{Kr} J. Krásenský, \textit{Pythagoras number of non-maximal orders in biquadratic fields}, in preparation.


\bibitem[KT]{KT} V. Kala, M. Tinková, \textit{Universal quadratic forms, small norms and traces in families of number fields}, submitted. Available at \href{http://arxiv.org/abs/2005.12312}{arXiv:2005.12312}.

\bibitem[KY]{KY} V. Kala and P. Yatsyna, \textit{Lifting problem for universal quadratic forms}, Adv. Math. 377, 24 pp. (2021).


\bibitem[KRS]{KRS} J. Kr\'asensk\'y, M. Ra\v{s}ka, E. Sgallov\'a, \textit{Pythagoras numbers of orders in biquadratic fields}, submitted. Available at \href{https://arxiv.org/abs/2105.08860}{arXiv:2105.08860}.


\bibitem[La]{La} T. Y. Lam, \textit{Introduction to quadratic forms over fields}, Graduate Studies in Mathematics 65, American Mathematical Society (2005).

\bibitem[Ma]{Ma} H. Maa{\ss}, \textit{{\"U}ber die Darstellung total positiver Zahlen des K{\"o}rpers  R($\sqrt5$) als Summe von drei Quadraten}, Abh. Math. Sem. Univ. Hamburg 14, 185--191 (1941).

\bibitem[Mi]{Mi} J. S. Milne, \textit{Algebraic Number Theory} (v3.08), 2020, Available at \url{www.jmilne.org/math/}.




\bibitem[O'M]{OMeara} O. T. O’Meara, \textit{Introduction to Quadratic Forms}, Springer-Verlag (1973).


\bibitem[Pe]{Pe} M. Peters, \textit{Summen von Quadraten in Zahlringen}, J. Reine Angew. Math. 268/269, 318--323 (1974).

\bibitem[Pe2]{Pe2} M. Peters, \textit{Einklassige Geschlechter von Einheitsformen in totalreellen algebraischen Zahlk\"{o}rpern}. Math. Ann. 226, 117--120 (1977).

\bibitem[Pf]{Pf} A. Pfister, \textit{Quadratic forms with applications to algebraic geometry and topology}, London Math. Soc. Lect. Notes 217, Cambridge University Press (1995).





\bibitem[Sch]{Sch} R. Scharlau, \textit{On the Pythagoras number of orders in totally real number fields}, J. Reine Angew. Math. 316, 208--210 (1980).

\bibitem[Sch2]{Sch2} R. Scharlau, \textit{Zur Darstellbarkeit von totalreellen ganzen algebraischen Zahlen durch Summen von Quadraten}, dissertation at Universit{\"a}t Bielefeld (1979).

\bibitem[Sh]{Sh} D. Shanks, \textit{The simplest cubic number fields}, Math. Comp. 28, 1137--1152 (1974).


\bibitem[Ti]{Ti} M. Tinkov\'a, \textit{On the Pythagoras number of the simplest cubic fields}, submitted. Available at	\href{https://arxiv.org/abs/2101.11384}{arXiv:2101.11384}.

\bibitem[TVY]{TVY} S. V. Tikhonov, J. Van Geel and V. I. Yanchevskii, \textit{Pythagoras numbers of function fields of hyperelliptic curves with good reduction}, Manuscripta Math. 119, 305--322 (2006).





\end{thebibliography}
\end{document}